\newtheorem{thm}{Theorem} \newtheorem{lemma}{Lemma}  \newtheorem{propo}{Proposition}
\let\paragraph\subsection
\title{The Curvature of Graph Products}
\author{Oliver Knill}
\date{July 18, 2021}
\address{Department of Mathematics \\ Harvard University \\ Cambridge, MA, 02138 }
\subjclass{05CXX,  % Graph theory 
           68R10}  % Graph theory in computer science
\keywords{Graph theory, Arithmetic}
\begin{document}
\maketitle

\begin{abstract}
We show that the curvature $K_{G*H}(x,y)$ at a point $(x,y)$ in the 
strong product $G*H$ of two finite simple graphs is equal to 
the product $K_G(x) K_H(y)$ of the curvatures.
\end{abstract}

\section{The product formula}

\paragraph{}
If $G$ is a finite simple graph with 
{\bf simplex generating function} $f_G(t) = 1+f_0 t + f_1 t^2 + \cdots + f_d t^{d+1}$, where $f_k$ 
counts the number of {\bf complete sub-graphs} of $G$ of dimension $k$, the
{\bf curvature} of a vertex $x$ is defined as $K_G(x) = \int_{-1}^0 f_{S(x)}(s) \; ds$,
where $S(x)$ is the unit sphere of $x$, the graph generated by the vertices directly adjacent to $x$. 
Integrating out, the curvature is 
$$ K(x) = 1-\frac{f_0(S(x))}{2}+\frac{f_1(S(x))}{3}-\frac{f_2(S(x))}{4} + \cdots  \; . $$
The {\bf Gauss Bonnet formula} \cite{cherngaussbonnet} is $\chi(G) = \sum_x K(x)$. The relation appeared
first in \cite{levitt1992}, we explored it in \cite{cherngaussbonnet} 
also in Gauss-Bonnet and in particular in a discrete manifold situations. The general Gauss-Bonnet is 
proven by distributing each term $\chi(G)=\sum_{k} \omega(k)$ with $\omega(k)=(-1)^{{\rm dim}(k)}$ from a simplex 
$k$ with ${\dim}(k)+1$ vertices to each vertex in $x$, giving each weight ${\rm dim}(k)+1$.
This Euler handshake argument works for any multi-linear valuation \cite{valuation}.

\paragraph{}
If $G,H$ are two graphs, the {\bf strong product} $G*H$ of $G$ and $H$ has as vertices the Cartesian product
$V(G) \times V(H)$ of $V(G)$ and $V(H)$. Two vertices $(a,b),(c,d)$ in
$G*H$ are connected if both projections onto $G,H$ are either an edge in $G,H$ or a
vertex in $G,H$. Together with the discrete union, augmented to an additive group, 
we get a commutative and associative ring, the {\bf Shannon ring} of graphs. All quantities
like Euler characteristic, curvature or later also the indices $i_G(x)$ are extended to this ring
with $\chi(-A)=\chi(A), K_{-G}(x)=-K_G(x)$. Our result is:

\begin{thm}
$K_{G*H}(x,y) = K_G(x) K_H(y)$.
\label{Theorem1}
\end{thm}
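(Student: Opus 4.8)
The plan is to pass from the integral definition to the equivalent \emph{simplex sum} for curvature and then to organize the simplices of the strong product by their two projections.

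First I would rewrite the curvature in the Euler--handshake form already used for Gauss--Bonnet: distributing $\omega(\sigma)=(-1)^{\dim\sigma}$ equally to the $\dim\sigma+1$ vertices of each complete subgraph $\sigma$ gives
$$ K_G(x)=\sum_{\sigma\ni x}\frac{(-1)^{|\sigma|-1}}{|\sigma|}, $$
the sum running over all complete subgraphs $\sigma$ of $G$ containing $x$, where $|\sigma|=\dim\sigma+1$. This agrees with $\int_{-1}^0 f_{S(x)}(s)\,ds$ because $\int_{-1}^0 s^{k}\,ds=(-1)^{k}/(k+1)$ and the simplices through $x$ are exactly $\{x\}$ together with a simplex of $S(x)$, so that $\sum_{\sigma\ni x}s^{|\sigma|}=s\,f_{S(x)}(s)$.

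Next I would describe the simplices of $G*H$. If $\Sigma$ is a complete subgraph of $G*H$, then by the definition of the strong product its projections $A=\pi_G(\Sigma)$ and $B=\pi_H(\Sigma)$ are complete subgraphs of $G$ and $H$; conversely, \emph{every} subset of a product $A\times B$ of simplices is again a simplex of $G*H$. Hence, grouping the simplex sum for $K_{G*H}(x,y)$ according to the pair $(A,B)$ with $x\in A$ and $y\in B$, the inner sum runs over all $\Sigma\subseteq A\times B$ that contain $(x,y)$ and whose two projections are onto, i.e. over the \emph{covering} subsets of an $|A|\times|B|$ grid with a marked corner. The theorem thereby reduces to the purely combinatorial rectangle identity
$$ \sum_{\substack{\Sigma\subseteq A\times B,\ (x,y)\in\Sigma\\ \pi_G(\Sigma)=A,\ \pi_H(\Sigma)=B}}\frac{(-1)^{|\Sigma|-1}}{|\Sigma|}=\frac{(-1)^{|A|-1}}{|A|}\cdot\frac{(-1)^{|B|-1}}{|B|}, $$
which depends only on $p=|A|$ and $q=|B|$, since adjacency in $G$ and $H$ plays no further role inside a single rectangle.

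Finally I would prove this rectangle identity. Writing $\frac{(-1)^{|\Sigma|-1}}{|\Sigma|}=\int_{-1}^0 s^{|\Sigma|-1}\,ds$ and counting covering subsets of the $p\times q$ grid with forced corner by inclusion--exclusion over empty rows and columns, the left side becomes
$$ \sum_{i=0}^{p-1}\sum_{j=0}^{q-1}(-1)^{i+j}\binom{p-1}{i}\binom{q-1}{j}\int_{-1}^0(1+s)^{(p-i)(q-j)-1}\,ds. $$
Using $\int_{-1}^0(1+s)^{m-1}\,ds=1/m$, the double sum \emph{factors} as $S_p\,S_q$ with $S_p=\sum_{i=0}^{p-1}(-1)^i\binom{p-1}{i}\frac{1}{p-i}$, and $S_p=\int_0^1(x-1)^{p-1}\,dx=(-1)^{p-1}/p$ after writing $1/(p-i)=\int_0^1 x^{p-i-1}\,dx$ and summing by the binomial theorem. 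Summing the resulting products over all $(A,B)$ then separates as $\big(\sum_{A\ni x}\tfrac{(-1)^{|A|-1}}{|A|}\big)\big(\sum_{B\ni y}\tfrac{(-1)^{|B|-1}}{|B|}\big)=K_G(x)K_H(y)$. I expect the crux to be exactly this factorization of the covering-subset sum: it works \emph{because} the strong product makes every subset of $A\times B$ a simplex, so the inner sum sees only the two side lengths and the inclusion--exclusion splits into a product of two identical one-dimensional sums.
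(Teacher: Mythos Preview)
Your argument is correct, and it is \emph{not} the proof given in the paper. The paper proceeds integral-geometrically: it shows that the unit sphere $S(x,y)$ in $G*H$ is homotopic to the Zykov join $S(x)\oplus S(y)$ (via the ``cylinder'' decomposition $S(x)*B(y)\cup B(x)*S(y)$), deduces that Poincar\'e--Hopf indices satisfy $i_{G*H,gh}(x,y)=i_{G,g}(x)\,i_{H,h}(y)$ because $1-\chi$ is multiplicative on joins, and then takes expectation over independent random colorings $g,h$ to obtain $K_{G*H}(x,y)=K_G(x)K_H(y)$. In fact the paper explicitly says it had \emph{not} managed a direct combinatorial proof because there is no simple relation between the $f$-vectors of $G$, $H$, and $G*H$; your rectangle identity supplies exactly the missing piece, the point being that although $(1+s)^{(p-i)(q-j)-1}$ does not separate in $i$ and $j$, its integral $1/((p-i)(q-j))$ does. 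What the paper's route buys is extensibility: the same argument works verbatim for any index-expectation curvature coming from a product probability space $\Omega_G\times\Omega_H$, and it ties the result to the homotopy type of unit spheres. What your route buys is elementarity and self-containment: no Poincar\'e--Hopf, no homotopy, no probability---just the clique structure of the strong product (every subset of $A\times B$ is a clique) together with a clean inclusion--exclusion.
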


\paragraph{}
Since no simple relations between simplex generating functions of $G,H$ and $G*H$ exist,
we have not managed yet o get a direct combinatorial proof of this identity. For example, if $G$ is 
a complete graph $K_4$ with $f$-vector $(4,6,4,1)$ and $H$ is a star graph with $f$-vector $(5,4)$, then 
the strong product has the $f$-vector $(20, 94, 212, 277, 224, 112, 32, 4)$. 
Instead, we will show the identity {\bf integral geometrically}. Curvature is also an expectation of
{\bf Poincar\'e-Hopf indices} $i_{G,g}(x)$ \cite{poincarehopf,MorePoincareHopf}.
It turns out then that the indices 
$i_{G,g}(x), i_{H,h}(y)$ multiply and give $i_{G*H,g*h}(x,y)$. 
Taking expectations proves the curvature relation. 

\paragraph{}
If $g(x)$ is a scalar function on vertices of $G$ which is {\bf locally injective} (=coloring),
meaning $g(x) \neq g(y)$ if $x,y$ are adjacent in $G$, then the Poincar\'e-Hopf
index of $g$ at $x$ is $i_g(x) = 1-\chi(S_g(x))$, where $S_g(x) = \{ y \in S(x), g(y)<g(x) \}$.
The Poincar\'e-Hopf formula is $\chi(G) = \sum_x i_g(x)$. Similarly as in the Gauss-Bonnet formula, this can 
be proven by pushing the value $\omega(x)$ to the vertex set. But unlike in Gauss-Bonnet, where it 
is distributed equally, it is pushed onto the vertex in the complete subgraph graph $x$, where $g$ is maximal. 
For any probability space $(\Omega,\mathcal{A},P)$ of locally injective functions, the {\bf index expectation} 
$K(x) = {\rm E}[i_g(x)]$ is a curvature. In particular, if $(\Omega,P)= (\prod_x [-1,1], \prod_x (dx/2))$ 
is a product probability space, then the index expectation is the curvature defined above. 

\paragraph{}
An immediate consequence of the product formula is that the {\bf Euler characteristic}
of strong products multiply, a fact which can also be verified using cohomology:
If $p_G(t) = \sum_{k=0}^d b_k(G) t^k$ is the Poincar\'e polynomial of $G$ encoding the {\bf Betti numbers}
$b_k(G)$ then $\chi(G)=p_G(-1)$ is the Euler characteristic by Poincar\'e-Hopf. And the
K\"unneth formula implies that $G \to p(G)$ is a ring homomorphism from the Shannon ring to 
an integer polynomial ring. Of course, this assumes that cohomology is also extended naturally to negative
graphs  $b_k(-G)=-b_k(G)$. The K\"unneth formula can be verified quite readily by seeing $b_k$
as the dimension of the kernel of the Hodge Laplacian restricted to $k$-forms. In the strong product
the cup product of cohomology is swiftly implemented on the harmonic forms: if $f$ is a harmonic  $p$-form
and $g$ is a harmonic $q$ form, then $d^* f g$ is a harmonic $(p+q)$-form. 
We could get from \cite{ComplexesGraphsProductsShannonCapacity}
to \cite{KnillKuenneth} without chain homotopy but replace it with the fact that for two arbitrary
simplices, the connection graph of $G \times H$ and the Barycentric graph of $G \times H$ are 
homotopic.

\paragraph{}
We can also compare the product formula Theorem~\ref{Theorem1} in the context of 
discrete manifolds. Given two simplicial complexes $G,H$ which are discrete
manifolds, then $G \times H$ is a set of sets but not a simplicial complex.
We can  associate to $G$ a connection graph $G'$ and to $H$ a connection graph $H'$
and to $G \times H$ a connection graph $(G \times H)' = G' \star H'$. So, we have
for any simplicial complex a natural curvature at every set which has a graph theoretical
interpretation and that the curvature $K(x,y) = K(x) K(y)$ again has a graph theoretical
interpretation.

\paragraph{}
The theorem can be extended. This becomes clear when one looks at the proof. 
One way to extend it is to use curvatures defined by arbitrary probability 
spaces $\Omega_G$ and $\Omega_H$ on functions on $G,H$ and take the product probability space 
$\Omega_G \times \Omega_H$ for $G*H$. 
The curvature $K(x)$ applies for the Euler characteristic $\sum_x \omega(x)$ of $G$, where
$\omega(x)=(-1)^{{\rm dim}(x)}$ and the sum is over all complete sub-graphs $x$ of $G$. 

\paragraph{}
Limitations appear when leaving properties which are homotopy invariant.
There is also a curvature for the {\bf Wu characteristic} 
$$  \omega(G) = \sum_{x \sim y} \omega(x) \omega(y)  = \sum_x K(x) \; , $$ 
where the sum to the left is taken over all intersecting simplices $x,y$ written as $x \sim y$ for having a non-empty
intersection. But the product formula for the Wu curvature does not 
hold any more. This is no surprise as unlike Euler characteristic, Wu characteristic
is not a homotopy invariant but this is of advantage as it leads to a cohomology
which can distinguish homotopic but not equivalent fiber bundles \cite{CohomologyWuCharacteristic}. 

\section{The proof} 

\paragraph{}
It might be that a direct combinatorial check is difficult because we do not have
a simple relation between the $f$-vectors $f_G$ and $f_H$ or $f$-functions $f_G(t)$ and $g_G(t)$. 
Similarly as when we struggled 10 years ago to prove that the curvature
of discrete $(2d-1)$-dimensional manifolds is constant zero (this is not even defined in the continuum
as the Gauss-Bonnet-Chern formula only applies for even dimensional manifolds), we will use 
{\bf integral geometric tools}. In the case of odd-dimensional discrete manifolds, we found first
an integral geometric proof which could also be done by Dehn-Sommerville considerations \cite{dehnsommervillegaussbonnet}.
Here however, we deal with a result which holds for all finite simple graphs. Integral geometry
\cite{colorcurvature, indexexpectation} might remain as the most elegant approach. 

\paragraph{}
The key is to show that the unit sphere $S(x,y)$ at a point $(x,y)$ of $G \times H$ is homotopic to 
the join $S(x) \oplus S(y)$. For the join, of two graphs, the simplex generating functions multiply.
We need however to go through some homotopy deformation of the unit spheres $S(x,y)$ to see them as
the join of $S(x)$ and $S(y)$. This pretty much restricts the result to quantities like Euler characteristic
which are homotopy invariant. Now as has been known also since half a century, Euler characteristic is
the only linear valuation \cite{valuation} on which has this property. 
(Homotopy invariance implies invariance under Barycentric refinements
for which we know the Barycentric refinement operator explicitly and have it only one eigenvalue $1$ which 
corresponds an eigenvector corresponding to the Euler characteristic.)

\paragraph{}
The actually formula which holds for any finite simple graphs $G,H$ with vertices $x \in V(G)$ and $y \in V(H)$.
Let $B(x)$ the unit ball of $x$, the graph generated by the union of $S(x)$ and $x$.  

\begin{lemma}[Cylinder relation]
The unit sphere $S(x,y)$ of a vertex $(x,y)$ in $G*H$ satisfies
$S(x,y) = S(x)*B(y) \cup B(x)*S(y)$ with intersection set $S(x)*S(y)$.
\end{lemma}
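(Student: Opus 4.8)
The plan is to establish the identity block by block: match vertices, recognize the three products on the right as induced subgraphs of $G*H$, and only then compare edges. Writing $N(x)$ for the neighbors of a vertex $x$, we have $V(S(x))=N(x)$ and $V(B(x))=\{x\}\cup N(x)$, and likewise in $H$. From the definition of the strong product, $(a,b)$ is adjacent to $(x,y)$ exactly when $(a,b)\neq(x,y)$ and each projection is an edge or a vertex, i.e. $a\in\{x\}\cup N(x)$ and $b\in\{y\}\cup N(y)$; hence
$$V(S(x,y))=\big(\{x\}\cup N(x)\big)\times\big(\{y\}\cup N(y)\big)\setminus\{(x,y)\}.$$
Expanding the product into the four blocks $\{(x,y)\}$, $N(x)\times\{y\}$, $\{x\}\times N(y)$, $N(x)\times N(y)$ and deleting $(x,y)$ leaves precisely $V(S(x)*B(y))\cup V(B(x)*S(y))$; and because $x\notin N(x)$ and $y\notin N(y)$, these two vertex sets intersect in $N(x)\times N(y)=V(S(x)*S(y))$, the claimed intersection.

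Next I would check that the three products are the induced subgraphs of $S(x,y)$ on those blocks. The strong product commutes with passing to induced subgraphs: for $a,c\in N(x)$ and $b,d\in\{y\}\cup N(y)$, the rule $(a=c\text{ or }ac\in E(G))$ and $(b=d\text{ or }bd\in E(H))$ that defines $S(x)*B(y)$ is word for word the rule defining $G*H$ on the same vertices, since $S(x)\le G$ and $B(y)\le H$ are induced; the same holds for $B(x)*S(y)$ and for $S(x)*S(y)$. Thus the right-hand side is a cover of $S(x,y)$ by two induced subgraphs glued along $S(x)*S(y)$, which is the Mayer--Vietoris data the subsequent join argument needs.

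The step I expect to be the main obstacle is the edge comparison, since the union of the two pieces is not literally all of $S(x,y)$. It contains every edge whose endpoints both avoid the apex $x$ in the first coordinate or both avoid the apex $y$ in the second, but it misses the diagonal edges joining a vertex $(c,y)$ with $c\in N(x)$ to a vertex $(x,d)$ with $d\in N(y)$: such a pair is always adjacent in $G*H$ because both projections are edges, yet $(c,y)\notin V(B(x)*S(y))$ and $(x,d)\notin V(S(x)*B(y))$. So the equality must be read in the homotopy sense that the rest of the paper uses, namely that the inclusion of the cover into $S(x,y)$ is a homotopy equivalence. I would prove this by collapsing the diagonal simplices, exploiting that $B(x)=\{x\}\oplus S(x)$ and $B(y)=\{y\}\oplus S(y)$ are cones: the simplices of $S(x,y)$ that meet both $N(x)\times\{y\}$ and $\{x\}\times N(y)$ are exactly those absent from the cover, and a discrete-Morse matching should pair them off so that $S(x,y)$ collapses onto the cover without altering homotopy type or Euler characteristic. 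This collapse is what upgrades the Cylinder relation to the statement that $S(x,y)$ is homotopic to the join $S(x)\oplus S(y)$.
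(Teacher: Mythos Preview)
Your first two paragraphs are correct and are exactly what the paper does: the paper's own proof checks only the vertex decomposition, implicitly reading $S(x)*B(y)$, $B(x)*S(y)$, and $S(x)*S(y)$ as the induced subgraphs of $G*H$ on those product sets. Under that convention the equality is literal, and your argument establishes it together with the stated intersection.

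Your third paragraph goes beyond the paper. The observation is correct: with the ordinary graph union (edge set equal to the union of edge sets) the diagonal edges between $(c,y)$ with $c\in N(x)$ and $(x,d)$ with $d\in N(y)$ are indeed present in $S(x,y)$ but absent from both pieces, so as a graph equation the lemma fails. The paper does not comment on this; its proof simply stops at the vertex level. That does not make your proof of the lemma incomplete --- rather, it exposes an ambiguity in the paper's formulation. Where the issue actually matters is the next step (Cylinder to Join), and there your discrete-Morse sketch is only a plan: you have not exhibited a matching on the simplices that contain both a vertex with first coordinate $x$ and one with second coordinate $y$, and such a pairing is not automatic. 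For the Cylinder relation itself, however, your argument is complete and coincides with the paper's.
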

\begin{proof}
This is a direct consequence of the definition of the product.
Given a point $(a,b)$ in $G*H$ which is in $S(x,y)$, then
necessarily, $a$ is in $B(x)$ and $b$ is in $B(y)$. But since 
$(a,b)$ is different from $(x,y)$, we either $a \neq x$ which means
we are in $S(x)*B(y))$ or then $b \neq y$ which means we are
in $B(x)*S(y)$. 
\end{proof} 

\begin{lemma}[Cylinder to Join]
$S(x)*B(y) \cup B(x)*S(y)$ is homotopic to $S(x) \oplus S(y)$, where
$\oplus$ is the Zykov join. 
\end{lemma}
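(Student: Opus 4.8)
The plan is to read this statement as the combinatorial shadow of the classical topological identity presenting a join as the double mapping cylinder of the two product projections: if $\ast$ denotes the topological join, then for reasonable spaces
$$ X \ast Y \;\simeq\; \operatorname{hocolim}\bigl( X \xleftarrow{\ \mathrm{pr}\ } X\times Y \xrightarrow{\ \mathrm{pr}\ } Y \bigr) . $$
Write $U := S(x)*B(y)\cup B(x)*S(y)$ and set $P=S(x)*B(y)$, $Q=B(x)*S(y)$. The Cylinder relation already records that $P\cap Q = S(x)*S(y)$. The strategy is to collapse each of the two halves $P,Q$ onto a copy of one factor sphere, keep track of what these collapses do to the overlap $S(x)*S(y)$, and then recognize the glued object as the Zykov join.

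The engine for the first step is the following: if $C=K\oplus\{c\}$ is a cone with apex $c$, then the strong product $G*C$ collapses onto $G\times\{c\}\cong G$. Indeed, one deletes the non-apex fibre vertices $(g,k)$, $k\in K$, in any order; a short check of the strong-product adjacencies shows that the apex copy $(g,c)$ is adjacent to every remaining neighbour of $(g,k)$, so the link of $(g,k)$ is a cone with apex $(g,c)$ and each deletion is a legal homotopy move. Since $B(y)=S(y)\oplus\{y\}$ and $B(x)=S(x)\oplus\{x\}$ are cones, this gives $P\searrow S(x)\times\{y\}\cong S(x)$ and $Q\searrow \{x\}\times S(y)\cong S(y)$. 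Crucially, restricting these two collapses to the overlap $S(x)*S(y)$ shows that the vertex $(a,b)$ is sent to $(a,y)$ in the first and to $(x,b)$ in the second; that is, the overlap maps to the two factor spheres exactly by the projections $\pi_G$ and $\pi_H$.

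Thus $U$ is assembled, up to homotopy, as the pushout (double mapping cylinder) of $S(x)\xleftarrow{\pi_G} S(x)*S(y)\xrightarrow{\pi_H} S(y)$. To identify this with the join I would invoke that the clique complex of a strong product realizes the product of the factor complexes up to homotopy, $|S(x)*S(y)|\simeq |S(x)|\times|S(y)|$, so that $\pi_G,\pi_H$ become the two product projections. The displayed classical identity then yields that the pushout is the topological join $|S(x)|\ast|S(y)|$, which is precisely the space realized by the Zykov join $S(x)\oplus S(y)$ (this is the reason the join has multiplicative simplex generating function). Combining, $U\simeq S(x)\oplus S(y)$, as claimed.

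The hard part is making the pushout step rigorous inside the purely combinatorial homotopy calculus, where moves are vertex deletions and insertions with contractible links. The two retractions are deliberately \emph{incompatible} on the overlap: they send $S(x)*S(y)$ to the disjoint copies $S(x)\times\{y\}$ and $\{x\}\times S(y)$, between which $U$ has no edges at all. This incompatibility is exactly why the result is a join and not a common retract, and it means there is no collapse of $U$ directly onto $S(x)\oplus S(y)$; a discrete gluing lemma realizing the double mapping cylinder as an explicit graph (or a zigzag of collapses and expansions, plausibly routed through the connection or Barycentric graph as in the cited Künneth work) is required. Verifying that these discrete mapping-cylinder moves glue correctly along $S(x)*S(y)$, together with the input $|G*H|\simeq|G|\times|H|$, is where the real work lies; everything else is the cone collapse above.
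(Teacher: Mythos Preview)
Your approach and the paper's share the same core move: $B(x)$ and $B(y)$ are cones, hence contractible, so collapse them. The paper's proof is a two-line sketch that simply asserts ``since $B(x)$ and $B(y)$ are contractible, deform each to a point; what remains has every vertex of $S(x)$ adjacent to every vertex of $S(y)$.'' It does not analyze the gluing along $S(x)*S(y)$ at all --- the step you flag as the hard part is precisely the step the paper glosses over.

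What you add is the organizing framework: casting $U$ as a homotopy pushout of $S(x)\leftarrow S(x)*S(y)\rightarrow S(y)$ along the two projections, then invoking the classical join-as-double-mapping-cylinder identity together with $|A*B|\simeq |A|\times|B|$. At the level of geometric realizations and ordinary homotopy equivalence (which is all that is needed downstream --- only $\chi$ of the stable sphere is used), this makes the argument essentially complete and more honest than the paper's. Your cone-collapse lemma $G*C\searrow G\times\{c\}$ via domination of $(g,k)$ by $(g,c)$ is correct and is a genuine improvement in precision. If, on the other hand, one insists on an explicit zigzag of combinatorial graph-homotopy moves producing $S(x)\oplus S(y)$, neither you nor the paper supplies it; your observation that $U$ has \emph{no} edges between $S(x)\times\{y\}$ and $\{x\}\times S(y)$ shows that expansions, not just collapses, are unavoidable, and the paper's sketch simply ignores this.
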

\begin{proof}
Every unit ball is a ball (defined as a sphere in which a point is removed)
and so contractible. Since $B(x)$ and $B(y)$ are contractible, we can deform each of them 
to a point. What end up with a situation, where
every point of $S(x)$ is connected to every point of $S(y)$. 
\end{proof}

\paragraph{}
A picture to visualize is to see $S(x,y)$ as the boundary of a
closed cylindrical can. In that case, $S(x)$ is a $1$-circle 
and $S(y)$ is a $0$-circle consisting of two points. 
The set $S(x)*B(y)$ is the cylindrical mantle of the can. 
The set $B(x)*S(y)$ are the top and bottom lids to the can. 
To make the deformation, deform the height of the can to zero.
Lift the center of the top lid and the center of the bottom lid
to get a union of two lids. Now deforming $B(y)$ to $S(y)$
will lead to a graph consisting of two unions of $S(x)$ and $S(y)$
where each point in $S(y)$ is connected to every point in $S(x)$. 

\paragraph{}
Let us write $\oplus$ for the join. 
From the ``cylinder to join" lemma we can assume that $S(x,y) = S(x) \oplus S(y)$. 
But the same also applies to the stable part $S_{gh}^-(x,y) = S_g^-(x) \oplus S_h^-(y)$. 
But this implies then that the indices tensor multiply. We started to work 
on the simplex generating function in 
\cite{AverageSimplexCardinality}. (It is just a sort of shifted Euler polynomial.)

\begin{propo}
$i_{G*H,g h}(x,y) = i_{G,g}(x) i_{H,h}(y)$. 
\end{propo}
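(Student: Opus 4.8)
The plan is to reduce the index identity to the behaviour of Euler characteristic under the Zykov join, and then to read off the join description of the stable sphere from the two cylinder lemmas applied to sublevel sets. Since $i_{G,g}(x) = 1-\chi(S_g^-(x))$, the proposition is equivalent to
$$ 1-\chi\bigl(S_{gh}^-(x,y)\bigr) = \bigl(1-\chi(S_g^-(x))\bigr)\bigl(1-\chi(S_h^-(y))\bigr). $$
The right-hand side is the join formula in disguise: because simplex generating functions multiply under the join and $f_G(-1)=1-\chi(G)$, evaluating $f_{A\oplus B}(t)=f_A(t)f_B(t)$ at $t=-1$ gives $1-\chi(A\oplus B) = (1-\chi(A))(1-\chi(B))$. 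Hence everything collapses to the single homotopy claim $S_{gh}^-(x,y)\simeq S_g^-(x)\oplus S_h^-(y)$, after which homotopy invariance of $\chi$ closes the argument.

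To produce the product coloring I would set $F(a,b)=g(a)+\lambda\,h(b)$ with $\lambda>0$ so small that $g$-differences across edges dominate; this lexicographic refinement of $g$ by $h$ is locally injective on $G*H$ whenever $g,h$ are. With it, a neighbour $(a,b)$ of $(x,y)$ satisfies $F(a,b)<F(x,y)$ exactly when $g(a)<g(x)$, or $a=x$ and $h(b)<h(y)$, which is the stable analogue of the \emph{Cylinder relation} lemma:
$$ S_{gh}^-(x,y) = S_g^-(x)*B(y)\;\cup\;\{x\}*S_h^-(y). $$
I would then run the \emph{Cylinder to Join} lemma on this decomposition. The ball $B(y)$ is contractible, so collapsing its factor retracts $S_g^-(x)*B(y)$ onto the column $S_g^-(x)*\{y\}\cong S_g^-(x)$, while the second piece is already a copy of $S_h^-(y)$; and since $a\sim x$ and $b'\sim y$, every surviving vertex $(a,y)$ with $a\in S_g^-(x)$ ends up adjacent to every $(x,b')$ with $b'\in S_h^-(y)$, which is exactly the complete bipartite connection defining $S_g^-(x)\oplus S_h^-(y)$.

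Putting the pieces together yields
$$ i_{G*H,gh}(x,y) = 1-\chi\bigl(S_g^-(x)\oplus S_h^-(y)\bigr) = \bigl(1-\chi(S_g^-(x))\bigr)\bigl(1-\chi(S_h^-(y))\bigr) = i_{G,g}(x)\,i_{H,h}(y). $$
The real obstacle is the stable decomposition, not the arithmetic. The symmetric guess --- that $(a,b)$ is stable precisely when both coordinates descend from $(x,y)$ --- is what one wants, but it is the sublevel set of no scalar coloring: one can place adjacent neighbours $(a,b),(a',b')$ of $(x,y)$, with $g$ rising and $h$ falling across one and the reverse across the other, for which that rule would force both $F(a,b)>F(a',b')$ and $F(a',b')>F(a,b)$. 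The lexicographic tie-break removes the inconsistency at the cost of an asymmetric decomposition, and the substance of the proof is to check that collapsing the contractible factor is an honest graph homotopy equivalence, compatible with the cross edges to $\{x\}*S_h^-(y)$ --- the discrete analogue of flattening the can --- so that the asymmetry washes out and the symmetric join $S_g^-(x)\oplus S_h^-(y)$ survives.
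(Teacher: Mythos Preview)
Your proof follows the same arc as the paper's: decompose the stable sphere in $G*H$ as a cylinder, collapse the contractible ball factor(s) to reach the Zykov join $S_g^-(x)\oplus S_h^-(y)$, and then invoke $(1-\chi(A))(1-\chi(B))=1-\chi(A\oplus B)$ together with homotopy invariance of $\chi$.

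The one substantive difference is the choice of product coloring and the resulting decomposition. The paper asserts the symmetric form $S^-_{G*H}(x,y)=B_G^-(x)*S_H^-(y)\cup S_G^-(x)*B_H^-(y)$ and then contracts both stable balls via the Cylinder-to-Join lemma; you instead build an explicit lexicographic coloring $F=g+\lambda h$, obtain the asymmetric $S_g^-(x)*B(y)\cup\{x\}*S_h^-(y)$, and collapse the single full ball $B(y)$. Your remark that the symmetric ``both coordinates weakly descend'' set cannot be the sublevel set of any scalar coloring is correct and pinpoints exactly the step the paper leaves informal; the lexicographic tie-break is a clean way to make that step rigorous, and your collapse is easier to justify (every $(a,b)$ with $b\in S(y)$ is dominated by $(a,y)$, since $y$ is adjacent to all of $B(y)$). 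The trade-off is that your $F$ is not the literal product $g\cdot h$ that the paper's code uses, so strictly you prove the index identity for $F$ rather than for $gh$ as written---harmless for the downstream curvature theorem, where only independence of $g$ and $h$ is used, and arguably what the proposition should say.
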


\begin{proof}
In general, for any graphs $G,H$, we have $(1-\chi(G)) (1-\chi(H)) = (1-\chi(G \oplus H))$,
where $G \oplus H$ is the join. This simply follows from the fact that if $x$ is a $(k-1)$-simplex in $G$
and $y$ is a $(l-1)$-simplex in $H$, then $(x+y)$ is a $(k+l-1)$-simplex in $G+H$. 
The formula $(1-\chi(S_{G*H,g*h}(x)))= (1-\chi(S_{G,g}(x))) (1-\chi(S_{H,h}(x)))$
follows now from $S_{G*H,g*h}(x,y)$ being homotopic to the join of 
$S_{G,g}(x)$ and $S_{G,h}(y)$ and that the Euler characteristic is a homotopy invariant. 
We have $S_{G*H}^-(x,y) = B_G^-(x)*S_H^-(y) \cup S_G^-(x)*B_H^-(y)$,
which is homotopic to the join of $S_H^-$ and $S_G^-$.
\end{proof} 

\paragraph{}
The proof of the theorem is now straight forward: 

\begin{proof}
Take the expectation of the relation given in the Proposition 
and make use of the fact that the two random variables $X(g)=i_{G,g}(x)$ and 
$Y(h)=i_{H,h}(y)$ are independent. The expectation of the product is the product of the 
expectations. Here is a bit more formal derivation: \\
a) The random variables $g \to X(g), h \to Y(y)$ are independent. \\
b) $K(x) = {\rm E}[X] ,  K(y) = {\rm E}[Y]$ \\
c) $K(x,y) = {\rm E}[X Y]  = {\rm E}[X] {\rm E}[Y]$. 
\end{proof}

\section{Remarks}

\paragraph{}
The {\bf Lefschetz fixed point theorem} for graphs or simplicial complexes \cite{brouwergraph} assures that the 
{\bf Lefschetz number} $\chi(G,T)$ (the super trace of $T$ induced on harmonic forms) of 
a graph endomorphism $T: G \to G$ is equal to the sum $\sum_{x \in F} i_T(x)$ of {\bf indices}
of all fixed simplices $T(x)=x$. The index is $\omega(x=(-1)^{{\rm dim}(x)} {\rm sign}(T:x \to x)$.
A special case is the {\bf Brouwer fixed point theorem}: if $G$ is homotopic to $1$ as then, the Lefschetz 
number is $1$ so that there is at least one simplex $x$ that is fixed. An other special case
is if $T:G \to G$ is the identity, where the Lefschetz number is the Euler characteristic and
$i_T(x)=\omega(x)$. In that case, $\chi(G) = \sum_{x} \omega(x)$ is the definition of Euler characteristic. 
It is a bit easier to see that the Lefschetz number is compatible with the Shannon product. In the 
next section, we show why. The key reason is that the cup product implementation on cohomology is by Hodge
directly implementable using harmonic forms. As Whitney also realized
there is nothing mysterious about the cup product in cohomology: $k$-forms are functions
on complete sub-graphs with $(k+1)$-elements. The Hodge Laplacian is block diagonal and induces maps on the
finite dimensional vector spaces of $k$ forms. Now, if $g$ is a harmonic $p$ form and $h$ is a 
harmonic $q$ form, then the tensor product $g(x) h(y)$ is a function on complete $p+1+q+1$ graphs but
$d^* g h$ is now a harmonic $(p+q)$-form. This will imply that the Lefschetz number is compatible
with the Shannon product.

\paragraph{}
Given two finite simple graphs $G,H$ and two graph endomorphisms $T:G \to G, S: H \to H$. Then this induces
a natural endomorphism on $G*H$ as follows: if $T: V(G) \to V(G)$ and $S: V(H) \to V(H)$ 
are the induced maps on vertex sets: then the map $T*S(v,w) = (T(v),S(w))$ is a map on vertices of $G*H$ and
is again a graph endomorphism on $G*H$. The fixed point sum $\sum_{x=T*S(z)} i_{T*S}(z)$ is equal to 
$(\sum_{x=T(x)} i_T(x)) (\sum_{y=S(y)} i_S(y)$. The reason is that every fixed simplex $z$ of $T*S$ becomes after
projection onto $G$ or $H$ a fixed point of $T$ or $S$ respectively.
Also the Lefschetz number is multiplicative $\chi(G*H,T*S) = \chi(G,T) \chi(H,S)$, as one can see by 
diagonalizing each $T: H^k(G) \to H^k(G)$ and then see that if $T: g_k = \lambda_k g_k$ on $H^k(G)$
and $S: h_l = \mu_l h_l$ then $T*S: d^* g_k h_l = \lambda_k \mu_l d^* g_k h_l$. If $g_k$ was a $p$-form
and $h_l$ was a $q$-form, then $d^* g_k h_l$ is $(p+q)$-form. The super traces of $T,S$ induced on 
cohomology (kernels of Hodge Laplacians on $p$-forms) therefore multiply. If 
$L(G,T) = \sum_{p=0} (-1)^p {\rm tr}(T|H^p(G) \to H^p(G)) = \sum_{p} (-1)^p \sum_k \lambda_{k,p}$ and
$L(H,S) = \sum_{q=0} (-1)^q {\rm tr}(T|H^q(H) \to H^q(H)) = \sum_{q} (-1)^q \sum_l \mu_{l,q}$, then 
$L(G*H,T*S) = \sum_{p,q} (-1)^{p+q} \sum_{k,l} \lambda_{k,p} \mu_{l,q}$. 

\paragraph{}
The Cartesian product of two even-dimensional Riemannian manifolds $M \times N$ is a manifold
which has Gauss-Bonnet-Chern curvature $K_{M \times N}(x,y) = K_M(x) K_N(y)$, the product
of the curvatures of the individual parts. The curvature $K_M(x)$ is the
index expectation ${\rm E}[i_{M,g}(x)]$ of Poincar\'e-Hopf indices of Morse functions $g$ obtained
by Nash-embedding $M$ into an ambient Euclidean space and restricting a linear function to $M$.
The product formula for curvature could then be derived from the fact that $g \to i_{M,g}(x)$
and $g \to i_{N,g}(y)$ are independent random variables in the product probability space.
This approach gives a Gauss-Bonnet formula for any probability space on Morse functions. The Gauss-Bonnet
Chern version is a special symmetric form in which the probability space of Morse functions is rotationally
symmetric. 

\paragraph{}
The probabilistic approaches to curvature does not require the curvature to be smooth at all in the 
manifold case. It could be a divisor for example, like the Poincar\'e-Hopf indices themselves, which correspond
to probability spaces with one point only. If $\Omega(M),\Omega(N)$ are probability spaces and $i_g(G),i_h(G)$
divisor-valued random variables, then $i_g(G),i_H(G)$ are independent (they become independent random variables
after applying a test function: take a finite subset $U$ of vertices and sum up the $i_g(G)$ values over $U$).
In classical Riemannian geometry, one can derive the product formula also from the fact that the
Riemann curvature tensor $R_{ijkl}(x,y)$ at a point $(x,y)$ is block diagonal, if the basis in $T_{x,y} M \times N$
is adapted to the product because the sectional curvatures $R_{ijkl}(x,y)$ are then $0$ if $e_i \in T_xM$ and
$e_j$ in $T_yN$ and the basis is an orthonormal basis. We once explored the possibility to 
to deform a positive curvature manifold so that its index expectation curvature which is positive. 
\cite{DiscreteHopf,DiscreteHopf2} were attempts in this direction.

\paragraph{}
Assume now that $M,N$ are discrete $p$ and $q$-manifolds, (where with a discrete $p$-manifold we mean a 
finite simple graph for which all of the unit spheres is a $(p-1)$-sphere),
we have a Cartesian product in which pairs $(x,y)$ are vertices, where $x$ is a complete sub-graph
of $M$ and $N$ is a complete subgraph of $N$ and two points $(a,b),(c,d)$ are connected if either
$a \subset c, b \subset d$ or $a \supset c, b \supset d$. This product is related to the 
{\bf Stanley-Reisner product}, when a complex is represented as a polynomial ring and has the property that
$M \times N$ is still a manifold. However, the curvature is not the product of the curvatures of
$M$ and $N$. The product is also not associative. The only initial drawback of the strong product of
two manifolds is no more a manifold. This is not a problem
but an opportunity to widen up the notion of discrete d-manifold.
One possibility is to require that every unit sphere is homotopic to a $(d-1)$-sphere, an other is to allow
also unit-spheres which are contractible. We will write about this in some other work.

\paragraph{}
The curvature value $K(x)$ is defined for all finite simple graphs $G$ and as in the continuum, the curvature
is the index expectation of Poincar\'e-Hopf indices $i_{G,g}$. This is all very general,
holding for all finite simple graphs. The continuum in the form of compact Riemannian manifold $M$
can be seen as a limiting case. Given $h>0$, and a finite dense enough set of points $V$ in $M$
we get a graph by connect points which are close enough $d(x,y)<h$. For example, if $n=e^{1/h}$ are
chosen independently then the discrete curvature averaged over a ball of radius $\sqrt{1/h}$
converges for $h \to 0$ to the classical curvature. We will have to give here more details but the
reason is that both the finite graph as well as the compact manifold can both be embedded isometrically
into a large dimensional Euclidean space $E$. Take in both cases the same probability space of linear 
functions in $E$ (which carries a natural Haar measure). The Poincar\'e-Hopf indices of $G$ and $M$
now are very close. In the graph case, the expectation gives the Levit curvature we deal with.
In the manifold case, the expectation has to produce Gauss-Bonnet-Chern. 

\paragraph{}
The strong product of Shannon \cite{Shannon1956} was not introduced in a geometric setting at first
but as a tool in communication theory. It leads to a nice ring \cite{ArithmeticGraphs} which even can be
topologically completed. It shows remarkable compatibility both on topological and spectral level
\cite{RemarksArithmeticGraphs}. The strong product $G*H$ is often homotopic to a manifold
\cite{ComplexesGraphsProductsShannonCapacity} if $G$, $H$ are manifolds. This means that $G*H$
is a generalized manifold in which every unit sphere is homotopic to a sphere.
The Shannon product is compatible with cohomology which is visible from the fact that the
map mapping $G$ to the Poincar\'e-polynomial $p_G(t) = \sum_{k} b_k(G) t^k$ is a ring homomorphism
from the ring $(\mathcal{Z},+,*,0,1)$ of graphs to the ring $\mathbb{Z}[t]$ of integer polynomials.
This requires to extend cohomology to negative graphs as $b_k(-G) = - b_k(G)$. In any case, 
we have a remarkable compatibility of the Shannon ring both with topology as well as 
differential geometry. 

\section{Code}

\paragraph{}
The following code computes the curvature, as well as the Poincar\'e-Hopf indices with respect 
to some random function. 

\fontsize{1}{1} \selectfont
\lstset{language=Mathematica} \lstset{frameround=fttt}
\begin{lstlisting}[frame=single]
Generate[A_]:=Sort[Delete[Union[Sort[Flatten[Map[Subsets,A],1]]],1]]; 
Whitney[G_]:=Generate[FindClique[G, Infinity, All]]; NH=NeighborhoodGraph;
Fvector[G_]:=If[Length[VertexList[G]]==0,{},Delete[BinCounts[Map[Length,Whitney[G]]],1]];
Ffunction[G_,t_]:=Module[{f=Fvector[G]},1+Sum[f[[k]] t^k,{k,Length[f]}]]; 
EulerChi[G_]:=Module[{f=Fvector[G]},Sum[f[[k]](-1)^(k+1),{k,Length[f]}]]; 
S[G_,x_]:=Module[{H=NH[G,x]},If[Length[VertexList[H]]<2,Graph[{}],VertexDelete[H,x]]];
Curvature[G_,v_]:=Module[{G1,vl1,n1,k,u},G1=S[G,v];vl1=VertexList[G1]; n1=Length[vl1];
  u=Fvector[G1]; 1+Sum[u[[k]]*(-1)^k/(k+1),{k,Length[u]}]];
Curvatures[G_]:=Module[{vl=VertexList[G]},Table[Curvature[G,vl[[k]]],{k,Length[vl]}]];
RandFunction[G_]:=Table[Random[],{Length[VertexList[G]]}]; 
Indices[G_,g_]:=Module[{v=VertexList[G],w,H,T,u}, 
 Table[ T=S[G,v[[k]]]; u=VertexList[T]; w={};
        Do[If[g[[j]]<g[[k]] && MemberQ[u,v[[j]]],w=Append[w,v[[j]]]],{j,Length[v]}];
        H=Subgraph[T,w]; 1-EulerChi[H],{k,Length[v]}]]

StrongProduct[G_,H_]:=Module[{v,e={},e1,e2,q,
                  vG=VertexList[G],vH=VertexList[H],eG=EdgeList[G],eH=EdgeList[H]},
  eG=Table[Sort[{eG[[k,1]],eG[[k,2]]}],{k,Length[eG]}];
  eH=Table[Sort[{eH[[k,1]],eH[[k,2]]}],{k,Length[eH]}];
  v=Partition[Flatten[Table[{vG[[k]],vH[[l]]},{k,Length[vG]},{l,Length[vH]}]],2];
  Do[If[v[[k,2]]==v[[l,2]]&&MemberQ[eG,Sort[{v[[k,1]],v[[l,1]]}]],
         e=Append[e,v[[k]]->v[[l]]]],{k,Length[v]},{l,Length[v]}];
  Do[If[v[[k,1]]==v[[l,1]]&&MemberQ[eH,Sort[{v[[k,2]],v[[l,2]]}]],
         e=Append[e,v[[k]]->v[[l]]]],{k,Length[v]},{l,Length[v]}];
  e1=Table[{eG[[k,1]],eH[[l,1]]}->{eG[[k,2]],eH[[l,2]]},{k,Length[eG]},{l,Length[eH]}];
  e2=Table[{eG[[k,1]],eH[[l,2]]}->{eG[[k,2]],eH[[l,1]]},{k,Length[eG]},{l,Length[eH]}];
  q=Flatten[Union[e,e1,e2]]; UndirectedGraph[Graph[v,q]]];
TensProduct[g_,h_]:=Flatten[Table[g[[k]] h[[l]],{k,Length[g]},{l,Length[h]}]]; 

G=RandomGraph[{8,12}]; g=RandFunction[G];   KG=Curvatures[G]; IG=Indices[G,g]; 
H=RandomGraph[{7,17}]; h=RandFunction[H];   KH=Curvatures[H]; IH=Indices[H,h]; 
GH=StrongProduct[G,H]; gh=TensProduct[g,h]; KK=Curvatures[GH];II=Indices[GH,gh]; 

{TensProduct[IG,IH]==II,  TensProduct[KG,KH]==KK, Total[KK]==Total[II]==EulerChi[GH]}
\end{lstlisting}
\fontsize{8}{8} \selectfont

When increasing parameters, finding the list of all 
complete sub-graphs can be hard to compute. Note that as usual, grabbing code from the PDF
does not work. But the source code can as usual be copy pasted from the LaTeX source 
on the ArXiv: 

\paragraph{}
The following code does compute the simplex generating function and so also the number of all 
complete sub-graphs for any $k$ recursively using Gauss-Bonnet. While the above version is faster,
the following computation is timeless and does not invoke any clique computations. It is elegant
but in general slow. It could allow however to push the threshold of possible computations larger. 
Assume for example you have a graph for which your favorite clique finding algorithm finds the
clique generating function for all unit spheres, then one can get the clique generating function of
$G$. 

\fontsize{2}{2} \selectfont 
\lstset{language=Mathematica} \lstset{frameround=fttt}
\begin{lstlisting}[frame=single]
NH=NeighborhoodGraph; f[G_,t_]:=Module[{v=VertexList[G],e=EdgeList[G],n,w},n=Length[v];
  If[e=={},1+n*t,1+Integrate[Sum[w=v[[k]];f[VertexDelete[NH[G,w],w],s],{k,n}],{s,0,t}]]];
S[G_,x_]:=Module[{H=NH[G,x]},If[Length[VertexList[H]]<2,Graph[{}],VertexDelete[H,x]]];
Curvature[G_,x_]:=Module[{f0=f[S[G,x],t]}, Integrate[f0,{t,-1,0}]];  
Curvatures[G_]:=Module[{V=VertexList[G]},Table[Curvature[G,V[[k]]],{k,Length[V]}]];

G=RandomGraph[{30,100}]; f0=f[G,t]; EulerChi = 1-f0 /. t->-1; K=Curvatures[G];
Print[EulerChi==Total[K]]; Print[K]; 
\end{lstlisting}
\fontsize{8}{8} \selectfont

\begin{figure}[!htpb]
\scalebox{1.0}{\includegraphics{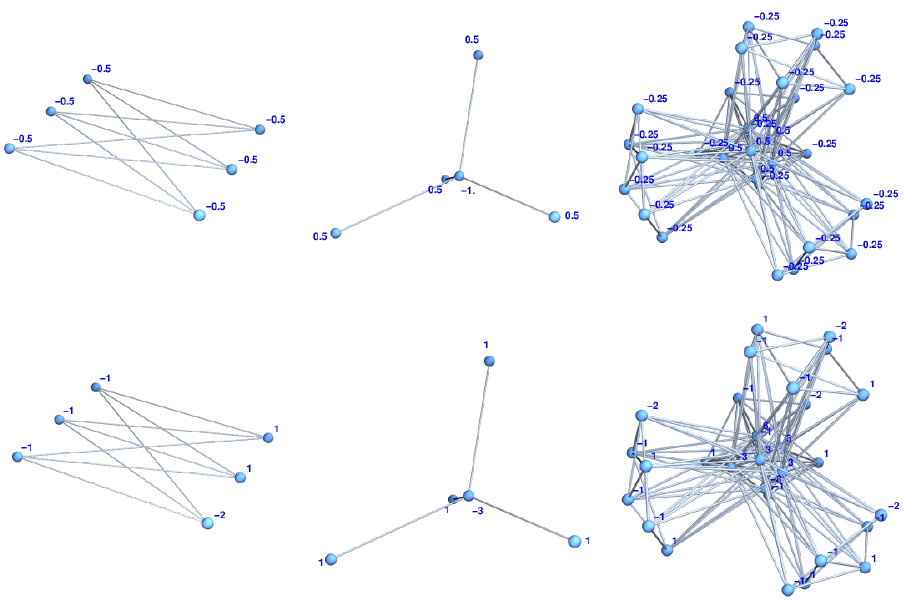}}
\label{Example}
\caption{
An example of two graphs. Above we see the curvatures, below the indices
taken with a random function.
}
\end{figure}

\bibliographystyle{plain}

\end{document}